\documentclass[11pt]{amsart} 

\usepackage[dvips]{graphics} 
\usepackage{color,amssymb,amsbsy,amsmath,amsfonts,amssymb,
amscd,epsfig,times,graphics}

\marginparwidth 0Pt \oddsidemargin 0pt \evensidemargin 0pt 
\marginparsep 0pt \topmargin 0pt \textwidth  6.5in 
\textheight 8.5in 

\newtheorem{theo}{Theorem}[section] 
\newtheorem{prop}[theo]{Proposition}

\newtheorem{lem}[theo]{Lemma}

\theoremstyle{definition}
\newtheorem{defi}[theo]{Definition}

\numberwithin{equation}{section}

\newcommand{\C}{\mathbb C} 
\newcommand{\R}{\mathbb R} 
 
\newcommand{\B}{\mathbb B}

\newcommand{\partx}{\partial/\partial x}
\tolerance 10000 
\sloppy 
\begin{document} 
\begin{abstract}
Let $D=\{\rho<0\}$ be a smooth domain of finite type in an almost complex
manifold  $(M,J)$ of real dimension four. We assume that the defining function $\rho$ is $J$-plurisubharmonic on a neighborhood of $\overline{D}$. We study the asymptotic behavior of pseudoholomorphic discs contained in 
the domain $D$. 
\end{abstract} 
\title[A note on pseudoconvex domains of finite type]
{A note on the geometry of pseudoconvex domains of finite type in almost complex manifolds} 
\author{Florian Bertrand}
\address{University of Wisconsin, 480 Lincoln Drive, Madison, Wisconsin 53706, USA}
\email{bertrand@math.wisc.edu}
\subjclass[2000]{32Q60, 32T25, 32T40, 32Q45, 32Q65}
\keywords{Almost complex structure, Kobayashi pseudometric, D'Angelo type}
\maketitle 

\section*{Introduction} 

A well known problem is to determine which smooth  domains into an almost complex manifold $(M,J)$ are locally complete hyperbolic in the sense of Kobayashi. It seems natural to make some curvature assumptions on such domains as any $J$-pseudoconcavity boundary point is at finite Kobayashi distance to the interior of the domain \cite{iv-ro}. 
I.Graham \cite{gra} gave asymptotic estimates of the Kobayashi pseudometric for strictly pseudoconvex bounded domains into $(\C^n,J_{st})$ and proved the complete hyperbolicity of those domains. In case $(M,J)$ is  any almost complex manifold, similar results were provided  by S.Ivashkovich-J.-P.Rosay \cite{iv-ro} and H.Gaussier-A.Sukhov \cite{ga-su}. 

The situation is far from being so clear when the domains are weakly pseudoconvex as the geometry of their boundary is much more complicated; and the question of whether a smooth weakly pseudoconvex domain is locally complete hyperbolic or not is still open, even in $(\C^2,J_{st})$. However, D.Catlin \cite{ca} obtained local estimates similar to those obtained in \cite{gra} on smooth pseudoconvex domains of finite type in $(\C^2,J_{st})$, implying  their local complete hyperbolicity (see also \cite{be2}). 

In the present paper we study this question for smooth $J$-pseudoconvex domains of finite type into a four dimensional almost complex manifold. In \cite{ber}, we described locally finite type domains $D=\{\rho<0\}$ where 
$\rho$ is a smooth defining function for $D$ and $J$-plurisubharmonic (see Proposition $2.6$ in \cite{ber}). 
More precisely, if $D=\{\rho<0\}\subset \C^2$ and if the origin $0 \in \partial D$ is of finite type $2m$, we proved that there is a change of coordinates in a neighborhood of the origin such that the structure $J$ and the 
function $\rho$ can be locally written:
\begin{equation}\label{eqstrintro}
J=\left(\begin{array}{ccccc} 
a_1& b_1 & 0 & 0 \\
c_1 & -a_1 & 0 & 0\\
0  & 0 & a_2 & b_2\\
0 & 0 & c_2 & -a_2\\ 
\end{array}\right)=J_{st}+O\left(|z_2|\right),
\end{equation} 
and,
\begin{equation}\label{eqintro}
\rho=\Re e z_2+H_{2m}\left(z_1,\overline{z_1}\right)+\widetilde{H}(z_1,z_2)+
O\left(|z_1|^{2m+1}+|z_2||z_1|^m+|z_2|^2\right)
\end{equation} 
where  $H_{2m}$ is a homogeneous polynomial of degree $2m$, subharmonic which  is not harmonic and 
$\widetilde{H}(z_1,z_2)=\Re e \displaystyle \sum_{k=1}^{m-1} \rho_{k} z_1^k z_2.$

In this paper, we suppose that the harmonic term $\widetilde{H}(z_1,z_2)$ in the above expression (\ref{eqintro}) 
is identically zero and we prove:  
\vskip 0,3cm

\begin{theo}\label{theo1} 
Let $J$ be a smooth almost complex structure defined on $\R^4$. 
Let $D=\{\rho<0\}$ be a  domain of finite type in $(\R^4,J)$,  
where $\rho$ is a smooth defining function of $D$, $J$-plurisubharmonic 
in a neighborhood of $\overline{D}$. We suppose that $J$ and $\rho$ satisfy respectively (\ref{eqstrintro}) and 
(\ref{eqintro}). Moreover we assume that $\widetilde{H}(z_1,z_2)$ in (\ref{eqintro}) is identically zero. Then there exists a neighborhood $U$ of the origin for which $0$  is at infinite distance from points in $D\cap U$.
\end{theo}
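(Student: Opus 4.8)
The plan is to deduce the statement from a localized lower bound for the Kobayashi--Royden infinitesimal pseudometric $K_{(D,J)}$ near the origin, and then integrate it. Precisely, I would aim to prove that there are a neighborhood $U$ of $0$ and a constant $c>0$ such that
\begin{equation}\label{eqgoal}
K_{(D,J)}(p,v)\ \geq\ c\left(\frac{|d\rho_p(v)|}{-\rho(p)}+\frac{|v|}{(-\rho(p))^{1/2m}}\right)\qquad\text{for all }p\in D\cap U,\ v\in T_pM .
\end{equation}
Once \eqref{eqgoal} is available the conclusion is standard. The first term says that $u:=-\log(-\rho)$ is, up to the factor $1/c$, $1$-Lipschitz for $d_{(D,J)}$ on $D\cap U$, so along any path joining a fixed $q\in D\cap U$ to a point $z$ with $-\rho(z)$ small one has $K$-length $\gtrsim \log\big(\!-\rho(q)/(-\rho(z))\big)$ \emph{as long as the path stays in $U$}; and if a near-minimizing path leaves $U$ it must first travel a fixed Euclidean amount inside $\{-\rho<-\rho(q)\}\cap U$, which by the second term of \eqref{eqgoal} costs $K$-length $\gtrsim (-\rho(z))^{-1/2m}$ unless $-\log(-\rho)$ already picks up a large variation along it. In every case $d_{(D,J)}(q,z)\to\infty$ as $D\ni z\to 0$, which is the theorem; the second term is exactly what prevents a competing disc from sliding down to $0$ tangentially along $\partial D$.

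To prove \eqref{eqgoal} I would use the non-isotropic dilations adapted to the type $2m$. For $p\in D\cap U$ set $\delta:=-\rho(p)$ and, after translating so that $p$ sits on the inner normal to $\partial D$ through the nearby boundary point, put $\Lambda_\delta(z_1,z_2):=(\delta^{-1/2m}z_1,\delta^{-1}z_2)$. A direct computation from \eqref{eqintro} gives $\delta^{-1}\,\rho\circ\Lambda_\delta^{-1}\longrightarrow \Re e\,z_2+H_{2m}(z_1,\overline{z_1})$ in $C^2_{\mathrm{loc}}$: by homogeneity $H_{2m}$ is scale invariant, while the remainder $O(|z_1|^{2m+1}+|z_2||z_1|^m+|z_2|^2)$ carries factors $\delta^{1/2m}$, $\delta^{1/2}$, $\delta$ and the harmonic part $\widetilde H$ carries factors $\delta^{k/2m}$ with $1\le k\le m-1$, so all of these disappear. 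Likewise $J=J_{st}+O(|z_2|)$ is block diagonal and $\Lambda_\delta$ is a scalar on each block, hence $(\Lambda_\delta)_*J\to J_{st}$ in $C^\infty_{\mathrm{loc}}$. Thus the rescaled data $\big(\Lambda_\delta(D\cap U),(\Lambda_\delta)_*J\big)$ converge to the model $\big(D_\infty,J_{st}\big)$ with $D_\infty=\{\Re e\,z_2+H_{2m}(z_1,\overline{z_1})<0\}$. Since $H_{2m}$ is subharmonic and not harmonic, $D_\infty$ has smooth boundary and is pseudoconvex of finite type $2m$ at $0$ in $(\C^2,J_{st})$ --- a rigid polynomial model --- so \eqref{eqgoal} holds for $D_\infty$ by Catlin's theorem \cite{ca} (or directly from the explicit shape of $D_\infty$). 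Pushing this estimate back through $\Lambda_\delta$ reproduces \eqref{eqgoal} for $D\cap U$ with a uniform $c$, since $\Lambda_\delta$ rescales $-\rho$ by $\delta$, $d\rho$ by $\delta$ in the normal direction and by $\delta^{1-1/2m}$ in the tangential one, and $|v|$ in the corresponding way.

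The transfer step is where the almost complex setting demands real work and is the main obstacle. First one needs a localization lemma so that only pseudoholomorphic discs essentially contained in a fixed neighborhood of $0$ enter the discussion; this I would get from the $J$-plurisubharmonicity of $\rho$, which provides a bounded $J$-psh comparison function on $D$ and hence $K_{(D,J)}(p,v)\gtrsim K_{(D\cap U',J)}(p,v)$ for $p$ near $0$. Second one needs the lower semicontinuity of $K$ under the convergence $\big(\Lambda_\delta(D\cap U),(\Lambda_\delta)_*J\big)\to(D_\infty,J_{st})$; this rests on uniform interior estimates for the elliptic system defining $J$-holomorphic discs and on the normal families theory for $J$-holomorphic discs with locally bounded image (as in \cite{iv-ro,ga-su}), so that a sequence of near-extremal discs at $p$, rescaled by $\Lambda_\delta$, subconverges to a disc in $D_\infty$. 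The hypothesis $\widetilde H\equiv 0$ is exactly what makes this uniform: in the integrable case $\widetilde H$ is removable by the biholomorphism $z_2\mapsto z_2\big(1+\sum_k\rho_k z_1^k\big)$, but the analogous substitution is not $J$-holomorphic, and a nonzero $\widetilde H$ leaves harmonic terms that, interacting with the non-integrability of $J$, would survive the scaling and could let pseudoholomorphic discs reach $0$ too fast. Finally, the finite-type hypothesis on $H_{2m}$ is used at one concrete point --- the Green representation $\int_{\D_{1/2}}\log\frac{1}{2|\zeta|}\,\Delta(\rho\circ f)(\zeta)\,dA(\zeta)\le -\rho(f(0))$ applied to the subharmonic function $\rho\circ f$ --- which converts the strict subharmonicity of $H_{2m}$ into the tangential $(-\rho)^{-1/2m}$ bound; this is precisely the step whose naive version fails in a fixed neighborhood and forces the rescaling above.
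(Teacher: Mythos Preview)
Your route---anisotropic scaling $\Lambda_\delta$ to the rigid model $(D_\infty,J_{st})$, then Catlin plus lower semicontinuity of $K$ via normal families---is genuinely different from the paper's. The paper never rescales and never invokes Catlin. It fixes a disc $u:\Delta\to D\cap U$ with $u(0)\in Q(0,\delta)=\{|z_1|<\delta^{1/2m},\,|z_2|<\delta\}$ and proves $|\nabla u_1(0)|\le C\delta^{1/2m}$ and $|\nabla u_2(0)|\le C\delta$ directly. The tangential bound uses the local peak $J$-plurisubharmonic function $\varphi_{q'}$ at the boundary projection $q'$ of $u(0)$ together with the auxiliary $J$-psh function $\Psi_{q'}(z)=|z_1-q_1'|^{2m}+|z_2-q_2'|^2+|z_1-q_1'|^2|z_2-q_2'|^2$ and the mean-value inequality to get $|u_1(\zeta)-u_1(0)|\le C\delta^{1/2m}$ and $|u_2(\zeta)-u_2(0)|\le C\delta^{1/2}$, then the anisotropic gradient estimate of Lemma~\ref{lemgrad}. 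The sharp normal bound comes from computing $\mathcal L_J(\Re e\,z_2)$ explicitly (Lemma~\ref{lemlev}): since $J=J_{st}+O(|z_2|)$ is block diagonal, one gets $|\Delta\Re e\,u_2(\zeta)|\le C\delta$; subtracting the Newtonian potential of $\Delta\Re e\,u_2$ and a constant produces a \emph{negative harmonic} function $g$ with $|g(0)|\le C\delta$, and the Schwarz lemma for negative harmonic functions yields $|\nabla\Re e\,u_2(0)|\le C\delta$. The theorem then follows from $|\nabla(\chi\circ u)(0)|\le C\,\chi(u(0))$ for $\chi(z)=|z_1|^{4m}+|z_2|^2$ and the integration lemma of \cite{iv-ro}.

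Your outline has a real gap at the transfer step. For normal families to apply to $\Lambda_\delta\circ u$ you need the rescaled discs locally bounded; in the $z_2$-slot this means $|u_2(\zeta)-u_2(0)|\le C\delta$ on a fixed subdisc. But the peak-function/$\Psi_{q'}$ argument only yields $|u_2(\zeta)-u_2(0)|\le C\delta^{1/2}$, so $(\Lambda_\delta\circ u)_2$ has size $\delta^{-1/2}$ and the family is not precompact. Upgrading $\delta^{1/2}$ to $\delta$ is precisely the hard normal estimate the paper isolates via Lemma~\ref{lemlev} and the negative-harmonic-function trick, and it is where $\widetilde H\equiv 0$ actually enters: to build $g$ one needs $\Re e\,u_2(\zeta)\le C\delta$ from $\rho(u(\zeta))<0$, and a term $\rho_k z_1^k z_2$ with $1\le k\le m-1$ would contribute $O(\delta^{k/(2m)+1/2})$, which is $\gg\delta$. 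Your own account of $\widetilde H$ is inconsistent---you first (correctly) note it carries a factor $\delta^{k/2m}$ and disappears in the limit domain, then claim it would ``survive the scaling''; in fact it is irrelevant to the convergence $\Lambda_\delta(D)\to D_\infty$ but essential to the a priori normal bound that makes any scaling or normal-families argument run. Unless you supply that bound independently (which essentially reproduces the paper's Lemma~\ref{lemlev} step), the scaling approach stalls before reaching the model.
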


\vskip 0,3cm

The proof of this theorem is inspired by \cite{iv-ro} and is based on the construction of good $J$-plurisubharmonic functions whose use is significant in almost complex manifolds.

\vskip 0,7cm

\begin{flushleft}
{\it Acknowledgments.} I would like to thank J. Bland and J.-P.Rosay for helpful discussions. I am particularly
indebted to J.-P.Rosay who pointed out an erroneous argument in the previous version of my paper.
\end{flushleft}

\section{Preliminaries}
We denote by $\Delta$ the unit disc of $\C$ and by $\Delta_{r}$ 
the disc of $\C$ centered at the origin of radius $r>0$.
\subsection{Almost complex manifolds and pseudoholomorphic discs}

An {\it almost complex structure} $J$ on a real smooth manifold $M$ is a $\left(1,1\right)$ tensor field
 which  satisfies $J^{2}=-Id$. We suppose that $J$ is smooth.
The pair $\left(M,J\right)$ is called an {\it almost complex manifold}. We denote by $J_{st}$
the standard integrable structure on $\C^{n}$ for every $n$.
A differentiable map $f:\left(M',J'\right) \longrightarrow \left(M,J\right)$ beetwen two almost complex manifolds is said to be 
 {\it $\left(J',J\right)$-holomorphic}  if $J\left(f\left(p\right)\right)\circ d_{p}f=d_{p}f\circ J'\left(p\right),$ 
for every $p \in M'$. In case  $M'=\Delta \subset \C$, such a map is called a {\it pseudoholomorphic disc}.  
If $f:\left(M,J\right)\longrightarrow M'$ is a diffeomorphism, we define an almost complex structure, $f_{*}J$,  
on $M'$ as the
 direct image of $J$ by $f$ :
$$f_{*}J\left(q\right):=d_{f^{-1}\left(q\right)}f\circ J\left(f^{-1}\left(q\right)\right)\circ d_{q}f^{-1},$$ 
 for every  $q \in M'$.

\vspace{0,5cm}

The following lemma (see \cite{ga-su}) states that locally any almost
 complex manifold can be seen as the unit ball of 
$\C^n$ endowed with a small smooth perturbation of the standard
 integrable structure $J_{st}$. 
\begin{lem}\label{ilemloc}
Let $\left(M,J\right)$ be an almost complex manifold, with $J$ of class
 $\mathcal{C}^{k}$, $k\geq 0$. 
Then for every point $p \in M$ and every $\lambda_0 > 0$ there exist a neighborhood $U$ of $p$ and
 a coordinate diffeomorphism $z: U \rightarrow \B$ centered a $p$ (ie $z(p) =0$) such that  the
direct image of $J$ satisfies $z_{*}J\left(0\right) = J_{st}$ and
$||z_*\left(J\right) - J_{st}||_{\mathcal{C}^k\left(\bar {\B}\right)}
 \leq \lambda_0$.
\end{lem}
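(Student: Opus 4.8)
The plan is to reduce the statement to two elementary operations performed in a single coordinate chart: a \emph{pointwise} linear normalization of $J$ at the center $p$, followed by a \emph{dilation} that makes the resulting structure uniformly close to $J_{st}$ on the whole ball. First I would fix an arbitrary $\mathcal{C}^{k}$ chart $\varphi$ sending $p$ to the origin of $\R^{2n}\cong\C^{n}$, so that $\varphi_{*}J$ becomes a $\mathcal{C}^{k}$ almost complex structure defined near $0$; all subsequent work takes place in these coordinates, and I use throughout that the direct image is functorial, $(g\circ f)_{*}=g_{*}f_{*}$.

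For the first move I would invoke the classical fact that every linear complex structure on $\R^{2n}$ is conjugate to $J_{st}$: since $\varphi_{*}J(0)$ is a real matrix with $\big(\varphi_{*}J(0)\big)^{2}=-\mathrm{Id}$, there exists $L\in GL(2n,\R)$ with $L\,\varphi_{*}J(0)\,L^{-1}=J_{st}$. As $L$ is linear, its differential is $L$ itself, so by the definition of the direct image one gets $(L\circ\varphi)_{*}J(0)=L\,\varphi_{*}J(0)\,L^{-1}=J_{st}$. Writing $\psi=L\circ\varphi$ and $\widetilde{J}=\psi_{*}J$, I now have a $\mathcal{C}^{k}$ structure on some ball $\B(0,r_{0})$ with $\widetilde{J}(0)=J_{st}$.

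For the second move, for $0<t<r_{0}$ I would compose with the dilation $\sigma_{t}(\zeta)=\zeta/t$, setting $z=\sigma_{t}\circ\psi$ on $U=\psi^{-1}\!\big(\B(0,t)\big)$; this is a diffeomorphism onto $\B$ with $z(p)=0$. Since $\sigma_{t}$ is linear with differential $(1/t)\,\mathrm{Id}$ and inverse differential $t\,\mathrm{Id}$, evaluating the defining formula for the direct image yields the key identity
\[
z_{*}J(\zeta)=(\sigma_{t})_{*}\widetilde{J}(\zeta)=\widetilde{J}(t\zeta),\qquad \zeta\in\B .
\]
In particular $z_{*}J(0)=\widetilde{J}(0)=J_{st}$, the first required condition, which is left untouched by the dilation precisely because $\widetilde{J}(0)=J_{st}$ was arranged beforehand.

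It remains to control the $\mathcal{C}^{k}$ norm, and this is where the dilation does the real work and where the one genuinely delicate point lies. Differentiating $z_{*}J(\zeta)=\widetilde{J}(t\zeta)$, each derivative of order $j$ produces a factor $t^{j}$, so for $1\leq j\leq k$ one has $\|\partial^{j}(z_{*}J)\|_{\mathcal{C}^{0}(\overline{\B})}\leq t^{j}\,\|\partial^{j}\widetilde{J}\|_{\mathcal{C}^{0}(\overline{\B(0,r_{0}/2)})}$, which tends to $0$ as $t\to 0$; while for $j=0$ the continuity of $\widetilde{J}$ together with $\widetilde{J}(0)=J_{st}$ gives $\|z_{*}J-J_{st}\|_{\mathcal{C}^{0}(\overline{\B})}=\sup_{\zeta\in\overline{\B}}\big|\widetilde{J}(t\zeta)-\widetilde{J}(0)\big|\to 0$. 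Hence $\|z_{*}J-J_{st}\|_{\mathcal{C}^{k}(\overline{\B})}\to 0$, and choosing $t$ small enough forces it to be $\leq\lambda_{0}$. The only point requiring care is this last estimate: one must verify that the dilation simultaneously shrinks \emph{all} derivatives up to order $k$ by the positive powers $t^{j}$ and keeps the value at the center exactly equal to $J_{st}$, whereas the pointwise linear reduction of the first move is routine linear algebra and the preliminary chart is purely formal.
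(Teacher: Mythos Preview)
Your proposal is correct and follows exactly the approach sketched in the paper: choose a local chart centered at $p$, compose with a linear map to normalize $J$ at the origin to $J_{st}$, and then dilate so that the pushed-forward structure becomes $\mathcal{C}^{k}$-close to $J_{st}$ on the closed unit ball. The paper states this in one sentence without details; you have simply supplied the explicit computations (the conjugation by $L$, the identity $z_{*}J(\zeta)=\widetilde{J}(t\zeta)$, and the scaling of derivatives under dilation), all of which are correct.
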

This is simply done by considering a local chart $z: U \rightarrow \B$ centered a $p$ (ie $z(p) =0$), composing it 
with a linear diffeomorphism to insure $z_{*}J\left(0\right) = J_{st}$ and  dilating coordinates.   

\vspace{0,5cm}

So let  $J$ be an almost complex structure defined in  a neighborhood $U$ of the origin in $\R^{2n}$, and such 
that $J$ is sufficiently closed to the standard structure in uniform norm on the closure $\overline{U}$ of $U$. 
The $J$-holomorphy equation for a pseudoholomorphic disc 
$u : \Delta \rightarrow U \subseteq \R^{2n}$ is given by  
\begin{equation}\label{eqholo}
\frac{\partial u}{\partial y}-J\left(u\right)\frac{\partial u}{\partial x}=0.
\end{equation}
According to \cite{ni-wo}, for every $p \in M$, there is a neighborhood $V$ of zero in $T_{p}M$, such that for every $v \in V$, 
there is a $J$-holomorphic disc $u$ satisfying $u\left(0\right)=p$ and $d_{0}u\left(\partx\right)=v$.

\subsection{Levi geometry}

Let $\rho$ be a $\mathcal{C}^2$ real valued function on
a smooth almost complex manifold  $\left(M,J\right).$
We denote by $d^c_J\rho$ the differential 
form defined by 
$$d^c_J\rho\left(v\right):=-d\rho\left(Jv\right),$$
where  $v$ is a section of $TM$. 
The {\it Levi form} of $\rho$ at a point $p\in M$ and a vector 
$v \in T_pM$ is defined by
\begin{equation*}
\mathcal{L}_J\rho\left(p,v\right):=d\left(d^c_J\rho\right)(p)\left(v,J(p)v\right)=dd^c_J\rho(p)\left(v,J(p)v\right).
\end{equation*}

\vspace{0,5cm}

The next proposition is useful in order to compute the Levi form (see \cite{iv-ro}).   
\begin{prop}\label{proplevi}\mbox{ }
Let $p\in M$ and $v\in T_pM$. Then  
$$\mathcal{L}_J\rho\left(p,v\right)=\Delta \left(\rho \circ u\right)
 \left(0\right),$$ 
where $u : \Delta \rightarrow \left(M,J\right)$ is any $J$-holomorphic
 disc satisfying 
$u\left(0\right)=p$ and $d_0u\left(\partial/\partial_x\right)=v$.
\end{prop}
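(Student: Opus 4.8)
The plan is to compare the pullback of the two-form $dd^c_J\rho$ under a $J$-holomorphic disc with the ordinary Laplacian of $\rho\circ u$, exploiting naturality of the exterior derivative: for any smooth map $u:\Delta\to M$ one has $u^*(dd^c_J\rho)=d(u^*d^c_J\rho)$, so the whole computation reduces to identifying the $1$-form $u^*d^c_J\rho$ on the disc. The existence of a disc $u$ with $u(0)=p$ and $d_0u(\partial/\partial x)=v$ (for $v$ small, or after rescaling) is guaranteed by \cite{ni-wo}, and the argument below will show that $\Delta(\rho\circ u)(0)$ does not depend on which such disc is chosen, since it equals the intrinsic quantity $\mathcal{L}_J\rho(p,v)$.

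First I would record the two pointwise consequences of the $J$-holomorphy equation (\ref{eqholo}). Writing $\zeta=x+iy$, equation (\ref{eqholo}) says $d_\zeta u(\partial/\partial y)=J(u(\zeta))\,d_\zeta u(\partial/\partial x)$; applying $J$ once more and using $J^2=-Id$ gives also $d_\zeta u(\partial/\partial x)=-J(u(\zeta))\,d_\zeta u(\partial/\partial y)$. Then, using the definition $d^c_J\rho(w)=-d\rho(Jw)$ together with these relations, one computes at every $\zeta\in\Delta$
\[
(u^*d^c_J\rho)(\partial/\partial x)=-d\rho\bigl(du(\partial/\partial y)\bigr)=-\frac{\partial(\rho\circ u)}{\partial y},\qquad
(u^*d^c_J\rho)(\partial/\partial y)=-d\rho\bigl(-du(\partial/\partial x)\bigr)=\frac{\partial(\rho\circ u)}{\partial x}.
\]
Hence $u^*d^c_J\rho=d^c(\rho\circ u)$, the ordinary $d^c$ on $\Delta$ of the function $\rho\circ u$. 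Taking $d$ and invoking the classical identity $dd^c\varphi=\Delta\varphi\,dx\wedge dy$ for functions on $\C$, one obtains $u^*(dd^c_J\rho)=\Delta(\rho\circ u)\,dx\wedge dy$.

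Finally I would evaluate both sides at the origin on the pair $(\partial/\partial x,\partial/\partial y)$. The left-hand side is $dd^c_J\rho(p)\bigl(d_0u(\partial/\partial x),d_0u(\partial/\partial y)\bigr)=dd^c_J\rho(p)\bigl(v,J(p)v\bigr)=\mathcal{L}_J\rho(p,v)$, using $d_0u(\partial/\partial y)=J(p)v$ from the holomorphy relation; the right-hand side is $\Delta(\rho\circ u)(0)$, which yields the claimed formula. I do not expect a genuine obstacle here: $\rho$ being only $\mathcal{C}^2$ is harmless since $dd^c_J\rho$ is still a continuous form pulled back by a smooth map, and the only point requiring real care is the bookkeeping with the sign conventions in the definitions of $d^c_J$ and of the Levi form, so that the final identity comes out with precisely the constant $1$ and no spurious factor.
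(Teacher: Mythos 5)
Your proof is correct: the pointwise identities $u^*d^c_J\rho=d^c(\rho\circ u)$ coming from the $J$-holomorphy equation, the naturality $u^*(dd^c_J\rho)=d(u^*d^c_J\rho)$, and the evaluation at $0$ on $(\partial/\partial x,\partial/\partial y)$ give exactly the stated equality, including the independence of the choice of disc. The paper itself offers no proof and simply refers to \cite{iv-ro}; your pullback computation is precisely the standard argument given there, so there is nothing to add.
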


If $\mathcal{L}_{J}\rho(p,v)\geq 0$ for every $p \in M$ and every $v \in T_pM$, we say that $\rho$ is
 {\it $J$-plurisubharmonic}. It is a well know fact that $\rho$ is $J$-plurisubharmonic if 
and only if for every $J$-holomorphic disc $u: \Delta \rightarrow M$, 
$\rho \circ u$ is subharmonic (see \cite{iv-ro}).

\subsection{Pseudoconvex domains of finite type}

In this section, we recall some facts about pseudoconvex domains of finite type in four dimensional almost complex manifolds  (See \cite{ber} for more detailed facts).

Let $D=\{\rho<0\}$ is a smooth domain in $\R^4$. Assume that $\rho$ is $J$-plurisubharmonic on a neighborhood of $\overline{D}$
where the structure $J$ is defined on a fixed neighborhood $U$ of $\overline{D}$.
Moreover we suppose that the origin  is a boundary point of $D$.
\begin{defi}\label{defco} 
Let $u~: \left(\Delta,0\right)\rightarrow \left(\R^{4},0,J\right) $ be a $J$-holomorphic disc satisfying $u\left(0\right)=0$.
The order of contact $\delta_0\left(\partial D,u\right)$ with $\partial D$ at the origin is the degree of the first term 
in the Taylor expansion of $\rho \circ u$. We denote by $\delta\left(u\right)$ the multiplicity of $u$ at the origin. 
\end{defi}

We now define the (D'Angelo)  type and the regular type of the real hypersurface $\partial D$ at the origin.  

\begin{defi}\label{deftyp}\mbox{ }

 \begin{enumerate}
\item The (D'Angelo) type  of $\partial D$ at the origin is defined by:
$$\Delta^1\left(\partial D,0\right):=\sup \left\{\frac{\delta_0\left(\partial D,u\right)}{\delta\left(u\right)}, \mbox{ } 
u:\Delta\rightarrow \left(\R^{4},J\right) \mbox{  $J$-holomorphic nonconstant},  u\left(0\right)=0 \right\}.$$
The point $0$ is a point of finite (D'Angelo) type $2m$ if $\Delta^1\left(\partial D,0\right)=2m<+\infty$.
\item The   regular type of $\partial D$ at origin is defined by:
\begin{eqnarray*}
\Delta^1_{{\rm reg}}\left(\partial D,0\right)&:=&\sup \{\delta_0\left(\partial D,u\right), \mbox{ } 
u:\Delta\rightarrow \left(\R^{4},J\right) \mbox{  $J$-holomorphic },\\ 
& & \hspace{6,5cm} u\left(0\right)=0, d_0u\neq 0 \}.
\end{eqnarray*}
\end{enumerate}
\end{defi}
The  type condition  as defined in part 1 of Definition \ref{deftyp} was introduced by 
J.-P.D'Angelo \cite{d'a2} who proved that this coincides  with the regular type in complex manifolds
of dimension two.  It was proved in \cite{ber} that the (D'Angelo) type and  the regular type coincide in 
four dimensional almost complex manifolds.

\vspace{0,5cm}

In the next proposition, we describe locally the almost complex structure $J$ and the  defining function $\rho$ 
(see \cite{ber} for a proof).
\begin{prop}\label{prop1}
Let $D=\{\rho<0\}$ is a smooth domain in $\R^4$. Assume that $\rho$ is $J$-plurisubharmonic on a neighborhood of $\overline{D}$ where the structure $J$ is defined on a fixed neighborhood $U$ of $\overline{D}$. 
We suppose that the origin $0 \in partial D$ is a point of finite type $2m$. 
Then there is a local change of coordinates in a neighborhood of the origin such that, in the new coordinates:
\begin{equation}\label{eqstr}
J=\left(\begin{array}{ccccc} 
a_1& b_1 & 0 & 0 \\
c_1 & -a_1 & 0 & 0\\
0  & 0 & a_2 & b_2\\
0 & 0 & c_2 & -a_2\\ 
\end{array}\right)=J_{st}+O\left(|z_2|\right),
\end{equation} 

\begin{equation}\label{eqform}
\rho=\Re ez_2+H_{2m}\left(z_1,\overline{z_1}\right)+\widetilde{H}(z_1,z_2)+O\left(|z_1|^{2m+1}+|z_2||z_1|^m+|z_2|^2\right)
\end{equation} 
where  $H_{2m}$ is a homogeneous polynomial of degree $2m$, subharmonic which  is not harmonic and 
$$\widetilde{H}(z_1,z_2)=\Re e \displaystyle \sum_{k=1}^{m-1} \rho_{k} z_1^k z_2.$$
\end{prop}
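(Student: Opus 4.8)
The plan is to produce the coordinates in two stages: first straighten the almost complex structure into the block form (\ref{eqstr}), and then normalize the defining function into the form (\ref{eqform}). Throughout, the essential tool is the Nijenhuis--Woolf existence of pseudoholomorphic discs in prescribed directions \cite{ni-wo}, which replaces the holomorphic changes of coordinates available in the integrable case.

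First I would apply Lemma \ref{ilemloc} to reduce to coordinates in which $J(0)=J_{st}$ and $J$ is $\mathcal{C}^k$-close to $J_{st}$. The complex tangent line $H_0=T_0\partial D\cap J(0)\,T_0\partial D$ is then a $J_{st}$-complex line, and after a linear rotation I may assume $H_0=\mathrm{span}(\partial_{x_1},\partial_{y_1})$ with the real conormal direction equal to $\partial_{x_2}$. The key step is to construct two transverse pseudoholomorphic foliations $\mathcal{F}_1,\mathcal{F}_2$ near the origin, with leaves tangent at $0$ to the complex normal line $\mathrm{span}(\partial_{x_2},\partial_{y_2})$ and to $H_0=\mathrm{span}(\partial_{x_1},\partial_{y_1})$ respectively. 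Using the leaf indices as coordinates, so that the leaves of $\mathcal{F}_2$ are the sets $\{z_1=c\}$ and the leaves of $\mathcal{F}_1$ are the sets $\{z_2=c\}$, both distributions $D_1=\mathrm{span}(\partial_{x_1},\partial_{y_1})$ and $D_2=\mathrm{span}(\partial_{x_2},\partial_{y_2})$ become $J$-invariant; hence both off-diagonal blocks of $J$ vanish and $J$ takes the block form (\ref{eqstr}). Choosing the leaf $\{z_2=0\}$ together with its $J$-holomorphic parametrization (as the disc tangent to $H_0$), and fixing the transverse parametrization compatibly along it, forces $J=J_{st}$ on $\{z_2=0\}$, so that $J=J_{st}+O(|z_2|)$.

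Next I would normalize $\rho$. Since $d\rho(0)\neq 0$ points in the $\partial_{x_2}$-direction, $\rho=\Re e\,z_2+\,$(higher order). Expanding $\rho$ in a Taylor series, the pure terms in $z_1,\overline{z_1}$ that are pluriharmonic, i.e. of the form $\Re e\,h(z_1)$ with $h$ holomorphic, are absorbed into $\Re e\,z_2$ by a change of the form $z_2\mapsto z_2+h(z_1)$; this shift acts only in the $z_2$-fibre by a function of $z_1$, and since $J=J_{st}$ on $\{z_2=0\}$ its effect on $J$ is $O(|z_2|)$, so the block form and the estimate $J=J_{st}+O(|z_2|)$ are preserved. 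The remaining lowest-order pure term is a homogeneous polynomial $H_{2m}(z_1,\overline{z_1})$; the mixed terms $\Re e(z_1^k z_2)$ with $1\le k\le m-1$ are collected into $\widetilde{H}$; and all other contributions (pure $z_1$-terms of degree $\ge 2m+1$, mixed terms $z_1^k z_2$ with $k\ge m$, and terms quadratic or higher in $z_2$) fall into the error $O(|z_1|^{2m+1}+|z_2||z_1|^m+|z_2|^2)$, giving (\ref{eqform}). To identify the degree and the analytic nature of $H_{2m}$, I would use the disc $u(z_1)=(z_1,0)$, which is $J$-holomorphic because $J=J_{st}$ on $\{z_2=0\}$: then $\rho\circ u=H_{2m}+O(|z_1|^{2m+1})$, so its order of contact is $\deg H_{2m}$. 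As the D'Angelo type and the regular type coincide and equal $2m$ \cite{ber}, this forces $\deg H_{2m}=2m$ with $H_{2m}\not\equiv 0$. Applying Proposition \ref{proplevi} to $u$, the leading term of $\Delta(\rho\circ u)\ge 0$ yields $\Delta H_{2m}\ge 0$, so $H_{2m}$ is subharmonic; were it harmonic it would be $\Re e\,h(z_1)$ and thus removable, raising the order of contact above $2m$ and contradicting finiteness of the type, so $H_{2m}$ is not harmonic.

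The principal obstacle is the first stage: producing coordinates in which $J$ is simultaneously block-diagonal and equal to $J_{st}+O(|z_2|)$. In the integrable case this is immediate, but in the almost complex category one must instead build the two transverse pseudoholomorphic foliations, verify that the leaf-index coordinates decouple $J$, and check that the parametrizations along $\{z_2=0\}$ can be chosen so that $J-J_{st}=O(|z_2|)$. The secondary technical point is confirming that the changes removing the pluriharmonic part of $\rho$ do not destroy the block structure or the $O(|z_2|)$ estimate; this is controlled precisely because those changes act by functions of $z_1$ in the $z_2$-fibre while $J$ is already standard on $\{z_2=0\}$.
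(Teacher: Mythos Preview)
The paper does not prove Proposition~\ref{prop1}; it is quoted from the author's earlier work, and the text explicitly refers the reader to \cite{ber} for a proof. So there is no argument in this paper against which to compare your proposal.

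That said, your outline has a genuine gap in the second stage. The substitution $w_2=z_2+h(z_1)$ does \emph{not} preserve the block--diagonal form (\ref{eqstr}): conjugating $J=\left(\begin{smallmatrix}A&0\\0&B\end{smallmatrix}\right)$ by $d\Phi=\left(\begin{smallmatrix}I&0\\Dh&I\end{smallmatrix}\right)$ yields a lower--left block $Dh\cdot A-B\cdot Dh$. When $h$ is holomorphic this commutator vanishes where $A=B=J_{st}$, i.e.\ on $\{z_2=0\}$, but elsewhere it is only $O(|z_2|)=O(|w_2-h(w_1)|)$, not identically zero. Worse, the hypersurface on which $J=J_{st}$ becomes $\{w_2=h(w_1)\}$ in the new coordinates, so the estimate $J=J_{st}+O(|w_2|)$ is lost as well. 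Hence your assertion that ``the block form and the estimate $J=J_{st}+O(|z_2|)$ are preserved'' is not justified as written. The interaction between the normalization of $J$ and the normalization of $\rho$ is precisely the delicate point of the argument, and your sketch does not resolve it; one must either carry out the pluriharmonic reduction \emph{before} straightening $J$ and then choose the distinguished leaf $\{z_2=0\}$ of the pseudoholomorphic foliation so that the reduced form of $\rho$ survives, or replace the affine shift $z_2\mapsto z_2+h(z_1)$ by a change of variables built from the foliation itself. Either route requires more care than you indicate.
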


A crucial tool for the study of pseudoholomorphic curves into pseudoconvex domains of finite type is the local peak $J$-plurisubharmonic functions which existence was proved in \cite{ber}.
\begin{theo}\label{theoex}
Let $D=\{\rho<0\}$ be a domain of finite type in an almost complex 
manifold $\left(M,J\right)$ of  dimension four. We suppose that 
$\rho$ is a smooth defining function of $D$, $J$-plurisubharmonic on a neighborhood of $\overline{D}$. 
Let $p \in \partial D$ be a boundary point. Then  there exist a function $\varphi_p$ and a neighborhood $U$ of $p$ such that $\varphi_p$ is 
continuous up to $\overline{D}\cap U$ and satisfies: 
\begin{enumerate}
\item $\varphi_p$ is $J$-plurisubharmonic on  $D\cap U$,
\item $\varphi_p\left(p\right)=0$,
\item $\varphi_p <0$ on  $\overline{D}\cap U\backslash\{p\}.$
\end{enumerate} 
Such a function is called a local peak $J$-plurisubharmonic function at $p$.
\end{theo}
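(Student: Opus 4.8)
The plan is to reduce to the normal form of Proposition~\ref{prop1} and then to build a single continuous $J$-plurisubharmonic function $\varphi_p$ on $\overline D\cap U$ that vanishes only at $p$; conditions $(1)$--$(3)$ then follow at once. First I would apply Proposition~\ref{prop1} to center coordinates at $p=0$ so that $J=J_{st}+O(|z_2|)$ as in (\ref{eqstr}) and $\rho=\Re e\,z_2+H_{2m}(z_1,\overline{z_1})+\widetilde H(z_1,z_2)+O(|z_1|^{2m+1}+|z_2||z_1|^m+|z_2|^2)$ as in (\ref{eqform}), and work on a small ball $U$. On $\overline D\cap U$ one then has the defining inequality $\Re e\,z_2\le -H_{2m}(z_1)-\widetilde H-O(\cdots)$, which will be the main tool for checking negativity.

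Since a $J$-plurisubharmonic function is subharmonic along every $J$-holomorphic disc and hence obeys the maximum principle, it cannot attain a strict interior maximum; the peak at $0$ must therefore be produced through the transverse direction $\Re e\,z_2$, while the finite-type data must separate $0$ from the remaining boundary points in the tangential $z_1$-directions. Concretely I would look for $\varphi_p$ of the form $\Re e\,z_2$ corrected by $J$-plurisubharmonic terms built from $H_{2m}$ and a positive multiple of $|z_2|^2$: the subharmonicity, but non-harmonicity, of $H_{2m}$ furnishes a strictly positive tangential Laplacian on average, which is exactly the gain needed to push $\varphi_p$ strictly below $0$ along $\partial D\cap\{z_1\neq0\}$, while the $|z_2|^2$ term controls the curve $\{z_1=0\}\subset\partial D$ on which $\Re e\,z_2=O(|z_2|^2)$. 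Plurisubharmonicity would be verified pointwise via Proposition~\ref{proplevi}, computing $\mathcal L_J\varphi_p(\cdot,v)=\Delta(\varphi_p\circ u)(0)$ along arbitrary $J$-holomorphic discs; because $J=J_{st}+O(|z_2|)$, the non-integrable perturbation alters the standard Laplacian only by terms that are $O(|z_2|)$ and can be absorbed by the positive contribution of $|z_2|^2$ once $U$ is taken small enough.

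The hard part is the weak pseudoconvexity: the Levi form of $\rho$ degenerates tangentially at $0$ and, worse, the leading polynomial $H_{2m}$ need not be nonnegative, so no elementary linear combination of $\rho$, $H_{2m}$ and $|z_2|^2$ is simultaneously $J$-plurisubharmonic and $\le 0$ on $\overline D$. What ultimately forces the peak is the finite-type hypothesis itself: no nonconstant $J$-holomorphic disc lies in $\partial D$ through $0$, so along any disc that stays in $\overline D$ the contact with $\partial D$ is of finite order and $\varphi_p\circ u$ must drop. I would therefore first establish the estimate on the pure model $\rho_0=\Re e\,z_2+H_{2m}(z_1)$, where the subharmonicity and non-harmonicity of $H_{2m}$ can be exploited directly, if necessary through the Riesz potential of $\Delta H_{2m}$ to handle the directions where $H_{2m}<0$, and then treat the remainder $\widetilde H+O(\cdots)$ together with the structural error $J-J_{st}=O(|z_2|)$ as perturbations dominated by the model gain on a sufficiently small $U$. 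Continuity up to $\overline D\cap U$ is built into the construction, completing the verification of $(1)$--$(3)$.
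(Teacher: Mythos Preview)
The paper does not prove Theorem~\ref{theoex}; it is quoted from the author's earlier work \cite{ber} (``\dots which existence was proved in \cite{ber}''), so there is no in-paper argument to compare your proposal against.

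As for the proposal itself, it is a reasonable \emph{outline} of the architecture one expects---reduction to the normal form of Proposition~\ref{prop1}, $\Re e\,z_2$ as the transverse peaking term, the subharmonic weight $H_{2m}$ for tangential separation, and a multiple of $|z_2|^2$ to absorb the $O(|z_2|)$ structural errors---but it is not a proof. You yourself locate the genuine obstacle: $H_{2m}$ is subharmonic and non-harmonic but in general takes negative values, so no simple linear combination of $\rho$, $\Re e\,z_2$, $H_{2m}$ and $|z_2|^2$ is simultaneously $J$-plurisubharmonic and nonpositive on $\overline D$. Your resolution of this point is the single clause ``if necessary through the Riesz potential of $\Delta H_{2m}$'', and that is precisely where the substantive work of \cite{ber} lies; as written, nothing in the proposal guarantees that the resulting candidate is $J$-plurisubharmonic \emph{and} strictly negative on $\overline D\cap U\setminus\{0\}$. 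The mixed term $\widetilde H(z_1,z_2)=\Re e\sum \rho_k z_1^k z_2$ is also waved aside as a ``perturbation dominated by the model gain'', but since $\widetilde H$ is of order $|z_1|\,|z_2|$ while the model gain in the $z_1$-direction is only $|z_1|^{2m}$, this domination is not automatic and requires an argument. In short: the ingredients are correctly named, the difficulties are correctly located, but the key steps are promised rather than carried out.
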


\subsection{The Kobayashi pseudometric}
The existence of local pseudoholomorphic discs proved in \cite{ni-wo} 
allows to define the {\it Kobayashi pseudometric} $K_{\left(M,J\right)}$ for $p\in M$ and $v \in T_pM$ :
$$K_{\left(M,J\right)}\left(p,v\right):=\inf 
\left\{\frac{1}{r}>0, u : \Delta \rightarrow \left(M,J\right) 
\mbox{  $J$-holomorphic }, u\left(0\right)=p, d_{0}u\left(\partx\right)=rv\right\}.$$

Let $d_{\left(M,J\right)}$ be the integrated pseudodistance of $K_{\left(M,J\right)}$ defined by:
$$d_{\left(M,J\right)}\left(p,q\right): =\inf\left\{\int_0^1 
K_{\left(M,J\right)}\left(\gamma\left(t\right),\dot{\gamma}\left(t\right)\right)dt, \mbox{ }
\gamma : [0,1]\rightarrow M, \mbox{ }\gamma\left(0\right)=p, \gamma\left(1\right)=q\right\}.$$

We thus define :
\begin{defi}\label{defin}
\begin{enumerate}
\item The manifold $\left(M,J\right)$ is Kobayashi hyperbolic if the integrated pseudodistance $d_{\left(M,J\right)}$ is a distance.
\item Let $D\subset M$ be a domain in an almost complex manifold $\left(M,J\right)$. 
A point $p \in \partial D$ is said to be at finite distance from $q \in D$ if there is a sequence of points $q_j \in D$ converging to $p$ and whose Kobayashi distances $d_{\left(D,J\right)}(q_j,q)$ to $q$ stay bounded. 
Otherwise we say that the distance is infinite.
\end{enumerate}
\end{defi}

\vspace{0,7cm}

\section{Proof of Theorem \ref{theo1}}

In order to prove this theorem, we need the two following lemmas, where the dimension assumption is meaningful as it allows to find a coordinate system where the lines $\{z_1=c\}$ and $\{z_2=c'\}$ are almost complex submanifolds.

\begin{lem}\label{lemgrad}
Let $\Omega$ be an open subset 
of $(\R^{4},J)$ and let $J$ be an almost complex structure satisfying (\ref{eqstr}). 
Let $K$ be a compact subset of $\Omega$. There exists $\delta>0$
such that: for every $r\in [0,1)$
there exists a positive constant $C>0$
such that if $u=(u_1,u_2):\Delta \to \Omega$ is a $J$-holomorphic disc
with $u(\Delta)\subset K$, then
$$
|\nabla u_1(\zeta)| \leq C {\rm sup}_{t\in \Delta}|u_1(t)-u_1(0)|, \mbox{ and, } |\nabla u_2(\zeta)| \leq C {\rm sup}_{t\in \Delta}|u_2(t)-u_2(0)|,
$$
if ${\rm sup}_{t\in \Delta}\|u(t)-u(0)\|\leq \delta$ and
$\zeta \in \Delta_r$.
\end{lem}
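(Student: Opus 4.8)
The plan is to exploit the block-diagonal form (\ref{eqstr}) of $J$ to split the $J$-holomorphy equation (\ref{eqholo}) into two scalar equations, and then to establish an interior elliptic estimate for each of them separately. Write $J=\mathrm{diag}(A_1,A_2)$ where $A_1,A_2$ are the two $2\times 2$ blocks appearing in (\ref{eqstr}); each satisfies $A_i^2=-\mathrm{Id}$. If $u=(u_1,u_2):\Delta\to\Omega$ is $J$-holomorphic, equation (\ref{eqholo}) is equivalent to the pair
\[
\frac{\partial u_1}{\partial y}=A_1(u)\frac{\partial u_1}{\partial x},\qquad \frac{\partial u_2}{\partial y}=A_2(u)\frac{\partial u_2}{\partial x},
\]
which is exactly the analytic expression of the fact, recalled just before the statement, that the coordinate submanifolds $\{z_2=c\}$ and $\{z_1=c'\}$ are $J$-holomorphic. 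Consequently, for $i=1,2$, the map $w_i:=u_i-u_i(0)$ solves the \emph{same} homogeneous linear equation $\partial w_i/\partial y=A_i(u)\,\partial w_i/\partial x$, whose coefficient $\zeta\mapsto A_i(u(\zeta))$ involves only $u$, is smooth, and --- since $u(\Delta)\subset K$ --- takes values in a fixed compact set of matrices. It therefore suffices to prove, uniformly in $u$, an interior bound $|\nabla w_i(\zeta)|\leq C(r)\,\|w_i\|_{L^\infty(\Delta)}$ for $\zeta\in\Delta_r$, since $\|w_i\|_{L^\infty(\Delta)}=\sup_{t\in\Delta}|u_i(t)-u_i(0)|$.

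First I would fix $\delta$. Carrying out the construction in Lemma~\ref{ilemloc} at each point of $K$ using only a translation, a \emph{block-diagonal} linear map and a dilation --- operations that preserve the product splitting $\R^4=\R^2_{z_1}\times\R^2_{z_2}$, hence the block-diagonal shape of $J$ --- one covers $K$ by finitely many coordinate balls in which $\|J-J_{st}\|_{\mathcal C^1}$ is as small as prescribed, say $\leq\lambda_0$. Let $\delta>0$ be a Lebesgue number for this cover, so that every disc $u$ with $\sup_{t\in\Delta}\|u(t)-u(0)\|\leq\delta$ has image inside one of these balls; we now work in such a ball, transporting the final inequality back to the original coordinates at the cost of a bounded multiplicative constant. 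In these coordinates $J$ is still block-diagonal, and the two equations above become $\lambda_0$-small perturbations of the Cauchy--Riemann equation: each can be written in the standard linear form
\[
\frac{\partial w_i}{\partial\bar\zeta}+a_i(\zeta)\frac{\partial w_i}{\partial\zeta}+b_i(\zeta)\,\overline{\frac{\partial w_i}{\partial\zeta}}=0,\qquad \|a_i\|_{L^\infty}+\|b_i\|_{L^\infty}\leq C\lambda_0,
\]
with coefficients that are fixed smooth functions of $u$.

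I would then run the usual two-step elliptic bootstrap for such an equation. In the first step, the smallness of the coefficients makes a Neumann-series argument based on the $L^p$-boundedness (for $p$ slightly larger than $2$) of the Cauchy and Beurling transforms available, yielding $\|w_i\|_{W^{1,p}(\Delta_{r'})}\leq C(r',p)\,\|w_i\|_{L^\infty(\Delta)}$ for every $r'<1$; applying this to all the components of $u$ and using the Sobolev embedding produces a uniform $\mathcal C^\beta$-bound for $u$, hence a uniform $\mathcal C^\beta$-bound for the coefficients $A_i(u(\cdot))$. In the second step, the Schauder-type estimate for $\partial/\partial\bar\zeta$ with Hölder coefficients upgrades this to $\|w_i\|_{\mathcal C^{1,\beta}(\Delta_r)}\leq C(r)\,\|w_i\|_{L^\infty(\Delta)}$, which is precisely the asserted bound for $\nabla u_i$, $i=1,2$.

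The point requiring care --- and, in my view, the main obstacle --- is exactly this uniformity: the equation governing $w_i$ has coefficients that depend on the a priori uncontrolled disc $u$, so the interior estimate must come with a constant depending only on $r$, $K$ and $J$. This is what forces the preliminary localization through $\delta$ and Lemma~\ref{ilemloc} (which makes the perturbation small and keeps the intermediate constants of the bootstrap uniform), and it is also where the four-dimensionality is used in an essential way: it is what allows the block splitting, and hence the fact that the bound for $\nabla u_i$ only involves the oscillation of $u_i$ itself. Everything else is the standard elliptic theory of the $\bar\partial$-operator, used exactly as for pseudoholomorphic discs in \cite{iv-ro} and \cite{ga-su}, together with the routine bookkeeping of the block decomposition.
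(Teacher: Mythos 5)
Your proof is correct, but it follows a genuinely different route from the paper. The paper's own argument is a rescaling trick: after a linear change of variables making $J(u(0))=J_{st}$, it sets $\alpha_i=\sup_{t\in\Delta}|u_i(t)-u_i(0)|$ and applies the anisotropic dilation $\Lambda(z_1,z_2)=(\alpha_1^{-1}(z_1-u_1(0)),\alpha_2^{-1}(z_2-u_2(0)))$; because $J$ has the block-diagonal form (\ref{eqstr}), conjugation by the scalar blocks of $d\Lambda$ leaves the blocks unchanged, so $\Lambda_*J$ stays close to $J_{st}$ even though $\alpha_1/\alpha_2$ may be huge, and the disc $\Lambda\circ u$ maps into the unit bidisc; the standard isotropic gradient estimate (Proposition 2.3.6 of \cite{jcsi}) applied to $\Lambda\circ u$ then gives $|\nabla u_i(\zeta)|\leq C\alpha_i$ after undoing the scaling. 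You instead use the block structure to split the holomorphy equation into two scalar Beltrami-type equations, observe that each is homogeneous linear in the corresponding component $w_i=u_i-u_i(0)$ with coefficients controlled only through $u(\Delta)\subset K$ and the localization via $\delta$, and then reprove the interior estimate component-wise by the $L^p$--Schauder bootstrap. Both proofs hinge on the same structural point (the four-dimensional, block-diagonal form of $J$), and your localization, the block-diagonal-preserving normalization, and the transport of constants back to the original coordinates are all handled correctly. What the two approaches buy: the paper's proof is a few lines because it delegates all elliptic theory to the quoted isotropic estimate, the anisotropy being absorbed entirely by $\Lambda$; yours is self-contained and makes transparent why the bound for $\nabla u_i$ involves only the oscillation of $u_i$ (linearity of the split equation in $w_i$), at the cost of redoing the elliptic machinery --- in particular the Schauder step should be stated as the interior estimate for a first-order elliptic system with small $L^\infty$ and uniformly bounded $C^\beta$ coefficients (the $C^\beta$ bound on the coefficients coming, as you say, from the first bootstrap step applied to both components, so that the oscillation of $u_2$ enters the estimate for $u_1$ only through uniform constants).
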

This lemma is an anisotropic version of a result obtained by S.Ivashkovich, J.-P.Rosay in \cite{iv-ro}.
\begin{proof} 
Depending on $u(0) \in K$, one can make a linear change of variables such that in the new coordinates 
$J(u(0)) = J_{st}$. Set $\alpha_i:={\rm sup}_{t \in \Delta}~|u_i(t)-u_i(0)|$ for $i=1,2$ 
and consider the scaling map $\Lambda$ from $\R^4$ into itself defined
by  $\Lambda(z_1,z_2):=(\alpha_1^{-1} (z_1-u_1(0)),\alpha_2^{-1} (z_2-u_2(0)))$. 
Notice that the $\Lambda_*J$-holomorphic disc $\Lambda\circ u$ satisfies
$\Lambda\circ u (\Delta) \subseteq \Delta\times \Delta$.
If $\alpha_1$ and $\alpha_2$ are small enough  then $\Lambda_*J$ is close to $J_{st}$ as the structure $J$ has a diagonal form.
It follows from Proposition 2.3.6 in \cite{jcsi}, that for $|z|\leq r$ one gets for $i=1,2$, $|\nabla (\Lambda \circ u)_i(z)|\leq C$
for some positive constant $C$. Hence $|\nabla u_i(z)|\leq C\alpha_i$, as desired.
\end{proof}

A straightforward computation leads to this very useful lemma:
\begin{lem}\label{lemlev}
Assume that $J$ is an almost complex structure on $\R^4$ satisfying (\ref{eqstr}).
Then the Levi form of $\Re e z_2$ at  $z$ and $v=\left(X_1,Y_1,X_2,Y_2\right)$ is equal to  
\begin{eqnarray*}
\mathcal{L}_{J}\Re e z_2\left(z,v\right)&=& \left[\left(a_1-a_2\right)(z)\frac{\partial a_2}{\partial x_1}(z)
-c_2(z)\frac{\partial b_2}{\partial x_1}(z)+c_1(z)\frac{\partial a_2}{\partial y_1}(z)\right]X_1X_2+\\
&&\\
& &\left[\left(a_1+a_2\right)(z)\frac{\partial b_2}{\partial x_1}(z)
-b_2(z)\frac{\partial a_2}{\partial x_1}(z)+c_1(z)\frac{\partial b_2}{\partial y_1}(z)
\right]X_1Y_2+\\
&&\\
& &\left[-\left(a_1+a_2\right)(z)\frac{\partial a_2}{\partial y_1}(z)+
b_1(z)\frac{\partial a_2}{\partial x_1}(z)-c_2(z)\frac{\partial b_2}{\partial y_1}(z)
\right]Y_1X_2+\\
&&\\
& &\left[\left(a_2-a_1\right)(z)\frac{\partial b_2}{\partial y_1}(z)+
b_1(z)\frac{\partial b_2}{\partial x_1}(z)-b_2(z)\frac{\partial a_2}{\partial y_1}(z)
\right]Y_1Y_2+\\
&&\\
& & \left[\frac{\partial a_2}{\partial y_2}(z)-\frac{\partial b_2}{\partial x_2}(z)\right]\left(c_2(z)X_2^2-2a_2(z)X_2Y_2 -b_2(z)Y_2^2\right).
\end{eqnarray*}

\end{lem}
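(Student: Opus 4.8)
The plan is to compute the Levi form of $\Re e\, z_2$ directly from Proposition \ref{proplevi}, which identifies $\mathcal{L}_J\rho(z,v)$ with $\Delta(\rho\circ u)(0)$ for a $J$-holomorphic disc $u$ with $u(0)=z$ and $d_0u(\partx)=v$. Since we only need the value at the single point $z$ and for the single vector $v=(X_1,Y_1,X_2,Y_2)$, it suffices to produce a $2$-jet of such a disc: write $u(\zeta)=z+\zeta v+\tfrac{1}{2}\zeta^2 w+O(|\zeta|^3)$ with $\zeta=x+iy$, plug this into the $J$-holomorphy equation (\ref{eqholo}), namely $\partial u/\partial y-J(u)\,\partial u/\partial x=0$, and solve order by order for the second-order coefficient $w$ in terms of $v$, $J(z)$, and the first derivatives of the entries of $J$ at $z$. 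The block-diagonal form (\ref{eqstr}) is what makes this tractable: the $z_1$-block and $z_2$-block decouple at the level of the leading symbol, and only the dependence of the lower-right block $(a_2,b_2,c_2)$ on all four real variables, together with the off-diagonal coupling through $(a_1,b_1,c_1)$ evaluated at $z$, survives.

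Concretely, I would first expand the $J$-holomorphy equation to zeroth order in $\zeta$ to recover the compatibility $d_0u(\partial/\partial y)=J(z)v$, i.e. the imaginary part of $v$ as a tangent vector is determined; then expand to first order in $\zeta$. The first-order terms involve $w$ linearly through $J(z)$ and involve $v$ quadratically through the derivative $dJ(z)[v]$ (chain rule applied to $J(u(\zeta))$). Solving this linear system for $\Re e\, w$ and $\Im m\, w$ — again easy because $J(z)$ has the explicit $4\times 4$ block form with $J(z)^2=-\mathrm{Id}$ — gives $w$ as an explicit quadratic expression in $(X_1,Y_1,X_2,Y_2)$ with coefficients built from $a_i,b_i,c_i$ and their first partials at $z$. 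Finally, $\Delta(\rho\circ u)(0)$ for $\rho=\Re e\, z_2$ is simply $\Re e\, w_2$ (the second component of $w$, which is $u_2''(0)/2$ read appropriately), since $\Re e\, z_2$ is linear: $\Delta(\Re e\, u_2)(0)=\Re e\,(\partial^2 u_2/\partial x^2+\partial^2 u_2/\partial y^2)(0)=\Re e\, w_2$ after accounting for the Laplacian in the $\zeta=x+iy$ variable. Matching terms then yields exactly the five-line formula in the statement, with the $X_1X_2$, $X_1Y_2$, $Y_1X_2$, $Y_1Y_2$ cross terms picking up the off-diagonal entries $a_1,b_1,c_1$ evaluated at $z$ together with $\partial_{x_1},\partial_{y_1}$ derivatives of $a_2,b_2$, and the last line — purely in $X_2,Y_2$ — coming from the self-interaction of the lower block via $\partial_{x_2},\partial_{y_2}$ of $a_2,b_2$, which is precisely the Levi form of the induced structure on the $z_2$-slice.

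The main obstacle is purely bookkeeping: one must be careful that the Laplacian is taken in the disc variable $\zeta$, that the chain rule $\frac{d}{d\zeta}J(u(\zeta))$ is expanded using all four partial derivatives $\partial_{x_1},\partial_{y_1},\partial_{x_2},\partial_{y_2}$ of each entry, and that the first-order jet relation $d_0u(\partial/\partial y)=J(z)v$ is substituted consistently so that no spurious terms involving $\partial_{x_2},\partial_{y_2}$ of $a_1,b_1,c_1$ appear (they cancel, or rather are absent because $\rho=\Re e\, z_2$ only sees the second component $w_2$, and $\partial w_2/\partial(\text{anything})$ is fed only by the lower block of $J$ and by the values — not derivatives — of the upper block through the constraint). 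A clean way to organize the computation is to split $v=v'+v''$ into its $z_1$- and $z_2$-parts, note that $w_2$ receives a contribution quadratic in $v'$ (the $X_1,Y_1$ terms, mediated by how the lower block varies in $x_1,y_1$), a contribution quadratic in $v''$ (the last line), and a cross contribution $v'\cdot v''$ — but inspection of the final formula shows the cross terms $X_1X_2$ etc. are the only ones surviving, so the pure-$v'$ quadratic piece must vanish, which is a useful internal check reflecting that $\{z_2=\mathrm{const}\}$ is $J$-invariant and $\Re e\, z_2$ is constant on it. Once the jet is in hand the identification is immediate; I expect the whole argument to be about a page of careful but routine algebra, justifying the paper's description of it as "a straightforward computation."
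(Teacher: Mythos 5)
Your overall strategy --- computing $\mathcal{L}_{J}\Re e z_2(z,v)$ as $\Delta(\Re e\,u_2)(0)$ via Proposition \ref{proplevi} and the $J$-holomorphy equation --- can be made to work, but the jet ansatz you build it on fails at exactly the decisive step. A $J$-holomorphic disc is not holomorphic in $\zeta$, and its $2$-jet is not of the form $z+\zeta v+\tfrac12\zeta^2w$: that expression is harmonic, so under your ansatz $\Delta(\Re e\,u_2)(0)$ would be identically $0$, and the identification ``$\Delta(\Re e\,u_2)(0)=\Re e\,w_2$'' is false --- the Laplacian lives in the $\zeta\bar\zeta$-coefficient of the Taylor expansion, which your ansatz omits (also, the term $\zeta v$ forces $d_0u(\partial/\partial y)=J_{st}v$, contradicting the relation $d_0u(\partial/\partial y)=J(z)v$ that you correctly derive). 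Moreover, the holomorphy equation does not determine the full $2$-jet, so ``solving for $w$'' is not possible: differentiating $u_y=J(u)u_x$ in $x$ and in $y$ at $0$ gives $u_{xy}(0)=dJ(z)[v]\,v+J(z)u_{xx}(0)$ and $u_{yy}(0)=dJ(z)[J(z)v]\,v+J(z)u_{xy}(0)$, so $u_{xx}(0)$ remains free; what is determined, because $J(z)^2=-Id$ makes $u_{xx}(0)$ cancel, is precisely
\[
\Delta u(0)=dJ(z)[J(z)v]\,v+J(z)\,dJ(z)[v]\,v .
\]
The correct disc-based argument is to take the $x_2$-component of this identity, use the block form (\ref{eqstr}) (third row $(0,0,a_2,b_2)$), and, to match the printed coefficients, the relation $a_2^2+b_2c_2=-1$ coming from $J^2=-Id$, whose differentiation converts $b_2\,dc_2$ into $-2a_2\,da_2-c_2\,db_2$.

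Note also that the paper gives no disc argument at all: the ``straightforward computation'' it alludes to is the direct one from the definition of the Levi form, which is shorter and needs no existence theorem for discs. Since $\rho=\Re e\,z_2=x_2$ is linear and the third row of $J$ is $(0,0,a_2,b_2)$, one has $d^c_J\rho=-a_2\,dx_2-b_2\,dy_2$, hence $dd^c_J\rho=-\,da_2\wedge dx_2-\,db_2\wedge dy_2$, and evaluating this $2$-form on the pair $(v,J(z)v)$ and collecting the coefficients of $X_1X_2$, $X_1Y_2$, $Y_1X_2$, $Y_1Y_2$, $X_2^2$, $X_2Y_2$, $Y_2^2$ reproduces the displayed formula; in particular every term carries a factor $X_2$ or $Y_2$, which is your (correct) internal check that no pure $(X_1,Y_1)$ terms occur. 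So your proposal is the right kind of computation in spirit, but as written the step that produces $w$ and reads the Laplacian off it would fail; either repair it with the displayed identity for $\Delta u(0)$ or switch to the direct $dd^c_J$ computation.
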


\vspace{1cm}

\begin{proof}[Proof of Theorem \ref{theo1}.]Let $U$ be a neighborhood of $0$ in $\R^4$. 
We assume that, on $U$, the structure $J$ satisfies (\ref{eqstr}) and  that the defining function has the local expression 
\begin{equation}\label{eqform2}
\rho=\Re e z_2+H_{2m}\left(z_1,\overline{z_1}\right)+O\left(|z_1|^{2m+1}+|z_2||z_1|^m+|z_2|^2\right)
\end{equation} 
where  $H_{2m}$ is a homogeneous polynomial of degree $2m$, subharmonic which  is not harmonic.

\vspace{0.8cm}

Consider for a positive number $\delta>0$, the following anisotropic polydisc:
$$Q(0,\delta):=\{z \in \C^2, |z_1|<\delta^{\frac{1}{2m}}, |z_2|<\delta \}.$$
Notice that since the defining function $\rho$ satisfies (\ref{eqform2}), then for a sufficiently small 
$\delta<1$, if $z \in Q(0,\delta)$ then we have ${\rm dist}\left(z,\partial D\right)\leq c\delta$ 
for some positive constant $c>0$.

\vspace{0.8cm}

Let $q'=(q_1',q_2') \in \partial D \cap U$ be a boundary point and let $\varphi_{q'}$ be a local peak $J$-plurisubharmonic function at
 the point $q'$. There is a positive constant $C_1$ such that  
\begin{equation}\label{eqine}
-C_1\|z-q'\|\leq \varphi_{q'}\left(z\right) \leq -C_2\Psi_{q'}\left(z\right),
\end{equation}
where 
$$\Psi_{q'}\left(z\right):=|z_1-q_1'|^{2m}+|z_2-q_2'|^2+|z_1-q_1'|^2|z_2-q_2'|^2$$ 
is a $J$-plurisubharmonic function on  $U$, shrinking $U$ if necessary.

\vspace{0.8cm}

Let  $u~: \Delta \rightarrow D\cap U$ be a $J$-holomorphic disc such that $u\left(0\right) \in Q(0,\delta)$ 
is sufficiently  close to the origin. In order to prove that the origin $0$ is at infinite distance 
from points in $D\cap U$, we want to provide the following estimates 
\begin{equation*}
|\nabla u_1\left(0\right)|\leq C\delta^{\frac{1}{2m}} \mbox{ and } \mbox{ } |\nabla u_2\left(0\right)|\leq C\delta
\end{equation*} for a positive constant 
$C>0$.

\vspace{0.5cm}

Let $q'=(q_1',q_2') \in \partial D$ be the unique boundary point such that $q'=u(0)+(0,\delta_u)$ for some positive $\delta_u$. Notice that $\delta_u$ is asymptotically equivalent to ${\rm dist}\left(u\left(0\right),\partial D\right)$.
According to the $J$-plurisubharmonicity of $\Psi_{q'}$,  we have  for $|\zeta|< r$ where $0<r<1$:
$$\Psi_{q'}\left(u\left(\zeta\right)\right)\leq \frac{A}{2\pi}\displaystyle 
\int_0^{2\pi}\Psi_{q'}\left(u\left(re^{i\theta}\right)\right)d\theta,$$
for an appropriate positive constant $A$.
Hence using (\ref{eqine}) and the $J$-plurisubharmonicity of the peak function $\varphi_{q'}$ we obtain:
$$\Psi_{q'}\left(u\left(\zeta\right)\right)\leq -\frac{A}{2\pi C_2}\displaystyle \int_0^{2\pi} \varphi_{q'}\left(u\left(re^{i\theta}\right)\right)d\theta
\leq -\frac{A}{C_2}\varphi_{q'}\left(u\left(0\right)\right).$$
Since  
$$|u_2\left(\zeta\right)-q_2'|^{2}\leq \Psi_{q'}\left(u\left(\zeta\right)\right)$$ 
and according to (\ref{eqine}) and to the fact that $\|u(0)-q'\|=\delta_u$ is asymptotically equivalent to ${\rm dist}\left(u\left(0\right),\partial D\right)$, 
we  obtain for a positive constant $C_3$: 
$$|u_2\left(\zeta\right)-q_2'|^{2}\leq C_3{\rm dist}\left(u\left(0\right),\partial D\right).$$
Hence, for some other positive constant $C_3$ we have: 
\begin{equation}\label{eqnorm1}
|u_2\left(\zeta\right)-u_2(0)|\leq C_3{\rm dist}\left(u\left(0\right),\partial D\right)^{\frac{1}{2}},
\end{equation}
for $|\zeta|< r$ where $0<r<1$.

For the same reason, we obtain the following estimate which hold for $|\zeta|< r<1$:
\begin{equation}\label{eqtan1}
|u_1\left(\zeta\right)-u_1(0)|\leq C_3{\rm dist}\left(u\left(0\right),\partial D\right)^{\frac{1}{2m}}.
\end{equation}

According to Lemma \ref{lemgrad}, inequalities (\ref{eqnorm1}) and (\ref{eqtan1}) imply:
\begin{equation}\label{eqnab}
|\nabla u_2(\zeta)|\leq B \sup_{|t|<r} |u_2\left(t\right)-u_2\left(0\right)|\leq  
C_3B{\rm dist}\left(u\left(0\right),\partial D\right)^{\frac{1}{2}} \leq C_4\delta^{\frac{1}{2}}.
\end{equation}
and,
\begin{equation}\label{eqfin1}
|\nabla u_1(0)|\leq B \sup_{|t|<r} |u_1\left(t\right)-u_1\left(0\right)|\leq  
C_3B{\rm dist}\left(u\left(0\right),\partial D\right)^{\frac{1}{2m}} \leq C_4\delta^{\frac{1}{2m}}.
\end{equation}
for  positive constants  $B$ and $C_4$. Notice that (\ref{eqfin1}) is the desired tangential estimate.  

\vspace{0.5cm}

In order to obtain the normal estimate $|\nabla u_2(0)|$, we will construct a negative harmonic function. 
To achieve this, we first need to control $\Re e u_2(\zeta)$ and $|\Delta {\Re e}~u_2(\zeta)|$ by $\delta$.

From  (\ref{eqnorm1}) and (\ref{eqtan1}) we obtain for $|\zeta|< r$:
\begin{equation}\label{eqdist3}
|u_1(\zeta)|\leq |u_1\left(\zeta\right)-u_1(0)|+|u_1(0)| \leq  C_3{\rm dist}\left(u\left(0\right),\partial D\right)^{\frac{1}{2m}}+\delta^{\frac{1}{2m}} \leq C_5\delta^{\frac{1}{2m}}.
\end{equation}
and,
\begin{equation}\label{eqdist4}
|u_2(\zeta)|\leq |u_2\left(\zeta\right)-u_2(0)|+|u_2(0)| \leq C_3{\rm dist}\left(u\left(0\right),\partial D\right)^{\frac{1}{2}} + \delta \leq C_{5}\delta^{\frac{1}{2}}.
\end{equation}
for a positive constant $C_{5}$. According to these estimates, and since the defining function satisfies
(\ref{eqform2}), we obtain 
\begin{equation}\label{eqdist5}
\Re e  u_2(\zeta)\leq \left|H_{2m}\left(u_1(\zeta),\overline{u_1(\zeta)}\right)\right|+
O\left(|u_1(\zeta)|^{2m+1}+|u_2(\zeta)||u_1(\zeta)|^m+|u_2(\zeta)|^2\right)\leq C_{6}\delta. 
\end{equation}
if $|\zeta|<r$, where $C_{6}>0$.
Due to Proposition \ref{proplevi} applied to the function $\Re e z_2$, we have: 
\begin{equation}
\Delta {\Re e}~u_2(\zeta)=\mathcal{L}_{J}\Re e z_2\left(u(\zeta),\frac{\partial u}{\partial x}(\zeta)\right).
\end{equation}
According to Lemma \ref{lemlev}, (\ref{eqnab}), (\ref{eqdist4}) and to the fact that $J(z)=J_{st}+O(|z_2|)$, this implies
\begin{equation}\label{eqlap2}
|\Delta {\Re e}~u_2(\zeta)|\le C_{7}\delta
\end{equation}if $|\zeta|<r$, and a positive constant $C_{7}$.

We still denote by $\Delta {\Re e}~u_2$, the function equal to $\Delta {\Re e}~u_2$ on $\Delta_{r}$, 
and $0$ elsewhere. And consider the following harmonic function
\begin{equation}\label{eqg2}
g(\zeta):=\Re e u_2(\zeta)-\left[\frac{1}{2\pi}\ln |\zeta|*\Delta {\Re e}~u_2\right](\zeta)
-(C_{6}+C_{7})\delta.
\end{equation}
From (\ref{eqlap2}), we have 
$$\left|\frac{1}{2\pi}\ln |\zeta|*\Delta {\Re e}~u_2~\right|\leq C_{7}\delta.$$
Thus $g$ is a negative function and $|g(0)|\leq 2(C_{6}+C_{7})\delta$.
The classical Schwarz Lemma for negative harmonic functions gives
$|\nabla g(0)|\leq 2 |g(0)|$. Hence we obtain the following estimate:  
$$|\nabla {\Re e}~u_2(0)|\leq |\nabla g(0)|+{\rm Sup}~|\Delta {\Re e}~u_2|
\leq 2|g(0)|+C_7\delta\leq (4C_{6}+5C_{7})\delta.$$

Moreover, the $J$-holomorphy equation for the disc $u$ implies:
\begin{eqnarray*}
a_2(u(0))\frac{\partial \Re e u_2}{\partial x}(0)+b_2(u(0))\frac{\partial \Im m u_2}{\partial x}(0)&=&\frac{\partial \Re e u_2}{\partial y}(0)\\
c_2(u(0))\frac{\partial \Re e u_2}{\partial x}(0)-a_2(u(0))\frac{\partial \Im m u_2}{\partial x}(0)&=&\frac{\partial \Im m u_2}{\partial y}(0),\\
\end{eqnarray*}
so that 
$$|\nabla {\Im m}~u_2(0)|\leq C_{8} |\nabla {\Re e}~u_2(0)|,$$
for some positive constant $C_{8}$. 

We finally get  the  normal estimate:
\begin{equation}\label{eqfin2}
|\nabla u_2(0)|\leq C\delta.
\end{equation}

\vspace{0.8cm}

Let us show how  (\ref{eqfin1}) and (\ref{eqfin2}) imply that the origin $0$ is at infinite distance from points in $D\cap U$. Consider the function: 
$$\chi(z):=|z_1|^{4m}+|z_2|^2.$$
Since  $u(0) \in Q(0, \chi(u(0))^{\frac{1}{2}})$, it follows from the estimates 
(\ref{eqfin1}) and  (\ref{eqfin2}) that  
$|\nabla u_1(0)|\leq C\chi(u(0))^{\frac{1}{4m}}$ and $|\nabla u_2(0)|\leq C\chi(u(0))^{\frac{1}{2}}$ for some positive constant $C$ and thus:  
\begin{eqnarray*}
|\nabla (\chi\circ u )(0)| &\leq & 4m|\nabla u_1(0)||u_1(0)|^{4m-1}+2|\nabla u_2(0)||u_2(0)|\\
&\leq & 4mC\chi(u(0))^{\frac{1}{4m}}(\chi(u(0))^{\frac{1}{4m}})^{4m-1}+2C\chi(u(0))^{\frac{1}{2}}\chi(u(0))^{\frac{1}{2}}\\
&\leq & (4mC+2C)(\chi\circ u )(0)\\
\end{eqnarray*}
which achieves the proof of Theorem \ref{theo1} by an integration argument (see Lemma 1.1 in \cite{iv-ro}).

\end{proof}


\begin{thebibliography}{99}


\bibitem{be2}F.Berteloot, {\it Principe de Bloch et estimations de la m\'etrique de Kobayashi dans les domaines
de $\C^2$}, J. Geom. Anal., 13-1 (2003), 29-37.

\bibitem{ber}F.Bertrand, {\it Pseudoconvex regions of finite D'Angelo type in four dimensional almost complex manifolds}, to be published in Math. Z.

\bibitem{ca}D.Catlin, {\it Estimates of invariant metrics on pseudoconvex domains if dimension two}, Math. Z,200(1989),429-466.


\bibitem{d'a2}J.-P.D'Angelo, {\it Finite type conditions for real hypersurfaces}  J.Diff. Geometry, 14 (1979), 59-66.
 




\bibitem{ga-su}H.Gaussier, A.Sukhov, {\it Estimates of the Kobayashi metric on almost complex
manifolds}, Bull. Soc. Math. France, 133 (2005), no. 2, 259-273.


\bibitem{gra}I.Graham, {\it Boundary behaviour of the Caratheodory and Kobayashi metrics 
on strongly pseudoconvex domains in $\C^n$ with smooth boundary}, Trans. Amer. Math. Soc. 207 (1975), 219-240.



\bibitem{iv-ro}S.Ivashkovich, J.-P.Rosay {\it Schwarz-type lemmas for solutions of $\overline\partial$-inequalities 
and complete hyperbolicity of almost complex manifolds},  Ann. Inst. Fourier 54  (2004),  no. 7, 2387-2435.  




\bibitem{ni-wo}A.Nijenhuis, W.Woolf, {\it Some integration problems in almost-complex and complex manifolds}, Ann. Math.
77(1963), 429-484.



\bibitem{jcsi}J.-C.Sikorav, {\it Some properties of holomorphic curves in almost complex manifolds}, 
Holomorphic Curves in Symplectic Geometry, eds. M. Audin and J. Lafontaine, Birkhauser (1994), 165-189.

\end{thebibliography}
\end{document}